\documentclass[a4paper,12pt,reqno]{amsart}
\usepackage{amsfonts}
\usepackage{amsmath}
\usepackage{amssymb}
\usepackage[a4paper]{geometry}
\usepackage{mathrsfs}
\usepackage{hyperref}
\renewcommand\eqref[1]{(\ref{#1})} 
%
%
\setlength{\textwidth}{15.2cm}
\setlength{\textheight}{22.7cm}
\setlength{\topmargin}{0mm}
\setlength{\oddsidemargin}{3mm}
\setlength{\evensidemargin}{3mm}
\setlength{\footskip}{1cm}


\numberwithin{equation}{section}
\theoremstyle{plain}
\newtheorem{thm}{Theorem}[section]
\newtheorem{prop}[thm]{Proposition}

\theoremstyle{definition}

\renewcommand{\wp}{\mathfrak S}
\newcommand{\Rn}{\mathbb R^{n}}



\begin{document}

   \title[$L^{p}$-Caffarelli-Kohn-Nirenberg type inequalities]
   {$L^{p}$-Caffarelli-Kohn-Nirenberg type inequalities
   on homogeneous groups}
\author[T. Ozawa]{Tohru Ozawa}
\address{
  Tohru Ozawa:
  \endgraf
  Department of Applied Physics
  \endgraf
  Waseda University
  \endgraf
  Tokyo 169-8555
  \endgraf
  Japan
  \endgraf
  {\it E-mail address} {\rm txozawa@waseda.jp}
  }
\author[M. Ruzhansky]{Michael Ruzhansky}
\address{
  Michael Ruzhansky:
  \endgraf
  Department of Mathematics
  \endgraf
  Imperial College London
  \endgraf
  180 Queen's Gate, London SW7 2AZ
  \endgraf
  United Kingdom
  \endgraf
  {\it E-mail address} {\rm m.ruzhansky@imperial.ac.uk}
  }
\author[D. Suragan]{Durvudkhan Suragan}
\address{
  Durvudkhan Suragan:
  \endgraf
  Institute of Mathematics and Mathematical Modelling
  \endgraf
  125 Pushkin str.
  \endgraf
  050010 Almaty
  \endgraf
  Kazakhstan
  \endgraf
  and
  \endgraf
  Department of Mathematics
  \endgraf
  Imperial College London
  \endgraf
  180 Queen's Gate, London SW7 2AZ
  \endgraf
  United Kingdom
  \endgraf
  {\it E-mail address} {\rm d.suragan@imperial.ac.uk}
  }

\thanks{The second and third authors were supported in parts by the EPSRC
 grant EP/K039407/1 and by the Leverhulme Grant RPG-2014-02,
 as well as by the MESRK grant 5127/GF4. No new data was collected or generated during the course of research.}

     \keywords{Hardy inequality, Caffarelli-Kohn-Nirenberg inequality, homogeneous Lie group, homogeneous quasi norm}
     \subjclass[2010]{22E30, 43A80}

     \begin{abstract}
     We prove $L^{p}$-Caffarelli-Kohn-Nirenberg type inequalities on homogeneous groups, which is one of most general subclasses of nilpotent Lie groups, all with sharp constants.  We also discuss some of their consequences.
     Already in the Abelian cases of isotropic or anisotropic
$\mathbb{R}^{n}$ our results provide new conclusions in view
of the arbitrariness of the choice of the not necessarily
Euclidean quasi-norm.
     \end{abstract}
     \maketitle

\section{Introduction}

Consider the following weighted Hardy-Sobolev type
inequalities due to Caffarelli, Kohn and Nirenberg \cite{CKN-1984}:
For all $f\in C_{0}^{\infty}(\mathbb{R}^{n})$ it holds:
\begin{equation}
\left(\int_{\mathbb{R}^{n}}\|x\|^{-p\beta}|f|^{ p} dx\right)^{\frac{2}{p}}\leq C_{\alpha,\beta}\int_{\mathbb{R}^{n}}\|x\|^{-2\alpha}|\nabla f|^{2}dx,
\end{equation}
where for $n\geq3$:
$$-\infty<\alpha<\frac{n-2}{2},\; \alpha\leq \beta\leq\alpha+1,\;{\rm and} \; p=\frac{2n}{n-2+2(\beta-\alpha)},$$
and for
$n=2$:
$$-\infty<\alpha<0,\; \alpha<\beta\leq\alpha+1,\;{\rm and}\; p=\frac{2}{\beta-\alpha},$$
and where $\|x\|=\sqrt{x^{2}_{1}+\ldots+x^{2}_{n}}$.
Nowadays there is a lot of literature on Caffarelli-Kohn-Nirenberg type inequalities and their applications.
In the case $p=2$ (see e. g. \cite{WW-2003}), for all $f\in C_{0}^{\infty}(\mathbb{R}^{n})$, one has

\begin{equation}
\int_{\mathbb{R}^{n}}\|x\|^{-2(\alpha+1)}|f|^{2}dx\leq \widetilde{C}_{\alpha}\int_{\mathbb{R}^{n}}\|x\|^{-2\alpha}|\nabla f|^{2}dx,
\end{equation}
with any $n\geq2$ and $-\infty<\alpha<0$, which in turn can be presented for any $f\in C_{0}^{\infty}(\mathbb{R}^{n}\backslash\{0\})$ as
\begin{equation}\label{CKNinRn}
\left\|\frac{1}{\|x\|^{\alpha+1}}|f|\right\|_{L^{2}(\mathbb{R}^{n})}\leq C_{\alpha}\left\|\frac{1}{\|x\|^{\alpha}}|\nabla f|\right\|_{L^{2}(\mathbb{R}^{n})},
\end{equation}
all $\alpha\in\mathbb{R}^{n}$.

Motivating the development of the analysis associated to homogeneous groups in \cite{FS-Hardy}, Folland and Stein raised an important question of determining which elements of the classical harmonic analysis do depend only on the group and the dilation structures. The natural setting for this kind of problems is that of homogeneous groups, in particular, including the cases of anisotropic structures on $\mathbb R^{n}$. In this paper we show that the Caffarelli-Kohn-Nirenberg inequality continues to hold in the setting of homogeneous groups. In particular, it has to also hold on anisotropic $\mathbb R^{n}$, with a number of different consequences.

Recently a homogeneous group version of the inequality \eqref{CKNinRn} was obtained in the work \cite{Ruzhansky-Suragan:identities}, i.e., it was proved that if $\mathbb{G}$ is a homogeneous group
of homogeneous dimension $Q$,
then for all  $f\in C^{\infty}_{0}(\mathbb{G}\backslash\{0\})$ and for every homogeneous quasi-norm $|\cdot|$ on $\mathbb{G}$ we have
\begin{equation}\label{awHardyeq-g}
\frac{|Q-2-2\alpha|}{2}\left\|\frac{f}{|x|^{\alpha+1}}\right\|_{L^{2}(\mathbb{G})}\leq
\left\|\frac{1}{|x|^{\alpha}}\mathcal{R} f\right\|_{L^{2}(\mathbb{G})},\quad \forall\alpha\in\mathbb R,
\end{equation}
where $\mathcal{R}$ is defined by \eqref{EQ:Euler}.
Note that if $\alpha\neq\frac{Q-2}{2}$, then the constant in \eqref{awHardyeq-g} is sharp for any homogeneous quasi-norm $|\cdot|$ on $\mathbb{G}$.
In the abelian case ${\mathbb G}=(\mathbb R^{n},+)$, we have
$Q=n$, $e(x)=x=(x_{1},\ldots,x_{n})$, so for each $\alpha\in\mathbb{R}$ with $\alpha\neq\frac{n-2}{2}$ and for any homogeneous quasi-norm $|\cdot|$ on $\mathbb R^{n}$ the inequality \eqref{awHardyeq-g} implies
a new inequality with the optimal constant:
\begin{equation}\label{Hardy-r}
\frac{|n-2-2\alpha|}{2}\left\|\frac{f}{|x|^{\alpha+1}}\right\|_{L^{2}(\mathbb{R}^{n})}\leq
\left\|\frac{1}{|x|^{\alpha}}\frac{x}{|x|}\cdot\nabla f\right\|_{L^{2}(\mathbb{R}^{n})}.
\end{equation}
In turn, by using Schwarz's inequality with the standard Euclidean distance $\|x\|=\sqrt{x^{2}_{1}+\ldots+x^{2}_{n}}$, this implies the $L^{2}$ Caffarelli-Kohn-Nirenberg inequality \cite{CKN-1984} for $\mathbb{G}\equiv\mathbb{R}^{n}$ with the optimal constant:
\begin{equation}\label{CKN}
\frac{|n-2-2\alpha|}{2}\left\|\frac{f}{\|x\|^{\alpha+1}}\right\|_{L^{2}(\mathbb{R}^{n})}\leq
\left\|\frac{1}{\|x\|^{\alpha}}\nabla f\right\|_{L^{2}(\mathbb{R}^{n})},\;\forall\alpha\in\mathbb{R},
\end{equation}
for all $f\in C_{0}^{\infty}(\mathbb{R}^{n}\backslash\{0\}).$
Here optimality $\frac{|n-2-2\alpha|}{2}$ of the constant  was proved in \cite[Theorem 1.1. (ii)]{CW-2001}.
In addition, we can also note that the analysis in these type inequalities and improvements of their remainder terms has a long history, initiated by Brezis and Nirenberg in \cite{Brez3} and then in \cite{Brez1} for Sobolev inequalities, and in \cite{Brez4} for Hardy inequalities, see also \cite{Brez2}, with many subsequent works in this subject.

The $L^{2}$-inequality \eqref{CKNinRn} in the anisotropic setting as well as in the more abstract setting of homogeneous groups was analysed in \cite{Ruzhansky-Suragan:L2-CKN} by using an explicit formula for the remainder that is available in the case of $L^{2}$-spaces. Such a remainder formula fails in the scale of $L^{p}$-spaces for $p\not=2$ and, therefore, in this paper we approach these inequality by a different method.

Thus, the main aim of this paper is to extend the above inequality \eqref{awHardyeq-g} to the general $L^{p}$ case for all $1<p<\infty$ by a different approach.
Here, the first result of this paper is:
For each $f\in C^{\infty}_{0}(\mathbb{G}\backslash\{0\}),$ $1<p<\infty$,
and any homogeneous quasi-norm $|\cdot|$ on $\mathbb{G}$ we have
\begin{equation}\label{eqLpCKN}
\frac{|Q-\gamma|}{p}
\left\|\frac{f}{|x|^{\frac{\gamma}{p}}}\right\|^{p}_{L^{p}(\mathbb{G})}\leq
\left\|\frac{1}{|x|^{\alpha}}\mathcal{R} f\right\|_{L^{p}(\mathbb{G})}\left\|\frac{f}{|x|^{\frac{\beta}{p-1}}}\right\|^{p-1}_{L^{p}(\mathbb{G})},\quad \forall\alpha,\,\beta\in \mathbb{R},
\end{equation}
where $\gamma=\alpha+\beta+1$, and the constant
$\frac{|Q-\gamma|}{p}$ is sharp if $\gamma\neq Q$. Here $\mathcal{R}$ is the radial derivative operator on $\mathbb{G}$ defined by \eqref{EQ:Euler}.

All above inequalities are generalisations of the classical Hardy inequality, which takes the form
\begin{equation}\label{HRn-p}
\left\|\frac{f(x)}{\|x\|}\right\|_{L^{p}(\Rn)} \leq \frac{p}{n-p}\left\| \nabla f\right\|_{L^{p}(\Rn)},\quad
n\geq 2,\; 1\leq p<n,
\end{equation}
where $\nabla$ is the standard gradient in
$\mathbb{R}^{n}$, $f\in C_{0}^{\infty}(\mathbb{R}^{n}
\backslash \{0\})$, $\|x\|=\sqrt{x_{1}^{2}+...+x_{n}^{2}},$
and the constant $\frac{p}{n-p}$ is known to be sharp.

We refer to a recent interesting paper of Hoffmann-Ostenhof and Laptev \cite{Laptev15} on this subject for inequalities with weights, to \cite{HHLT-Hardy-many-particles} for many-particle versions, to Ekholm, Kova{\v{r}}{\'{\i}}k and Laptev \cite{EKL:Hardy-p-Lap} for $p$-Laplacian interpretations, and to many further references
therein and otherwise.
We refer also to more recent preprints \cite{Ruzhansky-Suragan:Layers}-\cite{Ruzhansky-Suragan:uncertainty} and references therein for the story behind Hardy type inequalities on nilpotent Lie groups.

Before giving preliminaries for stating our results let us mention another observation that the Hardy inequality \eqref{HRn-p} can be sharpened to the inequality
\begin{equation}\label{HRn-p-sh}
\left\|\frac{f(x)}{\|x\|}\right\|_{L^{p}(\Rn)} \leq \frac{p}{n-p}\left\| \frac{x}{\|x\|}\cdot\nabla f\right\|_{L^{p}(\Rn)},\quad
n\geq 2,\; 1\leq p<n.
\end{equation}
It is clear that \eqref{HRn-p-sh} implies \eqref{HRn-p} since the function $\frac{x}{\|x\|}$ is bounded.
The remainder terms for \eqref{HRn-p-sh} have been analysed by Ioku, Ishiwata and Ozawa
\cite{IIO:Lp-Hardy}, see also Machihara, Ozawa and Wadade \cite{MOW:Hardy-Hayashi}.

 One of the results in \cite{Ruzhansky-Suragan:identities} was that if $\mathbb G$ is any homogeneous group and $|\cdot|$ is a homogeneous quasi-norm on $\mathbb G$, as an analogue of \eqref{HRn-p-sh}
we obtain the following generalised $L^{p}$-Hardy
inequality:
\begin{equation}\label{iLpHardyeq0}
\left\|\frac{f}{|x|}\right\|_{L^{p}(\mathbb{G})}\leq\frac{p}{Q-p}\left\|\frac{e(x)}{|x|}\cdot A\nabla f\right\|_{L^{p}(\mathbb{G})},
\quad 1<p<Q,
\end{equation}
for all $f\in C_{0}^{\infty}(\mathbb{G}\backslash\{0\}).$
Here $\nabla=(X_{1},\ldots,X_{n})$ is a gradient on $\mathbb G$ with
a basis $\{X_{1},\ldots,X_{n}\}$ of the Lie algebra
 $\mathfrak{g}$ of $\mathbb{G}$,
$A$ is the $n$-diagonal matrix
\begin{equation}\label{EQ:mA0}
A={\rm diag}(\nu_{1},\ldots,\nu_{n}),
\end{equation}
where $\nu_{k}$ is the homogeneous degree of $X_{k}$, and
$$Q = {\rm Tr}\,A=\nu_{1}+\cdots+\nu_{n}$$
is the homogeneous dimension of $\mathbb G$.
We note that the exponential mapping ${\exp}_{\mathbb{G}}:\mathfrak g\to\mathbb G$ is a global diffeomorphism and the vector $e(x)=(e_{1}(x),\ldots,e_{n}(x))$
is the decomposition of its inverse ${\exp}_{\mathbb{G}}^{-1}$ with respect to the basis
$\{X_{1},\ldots,X_{n}\}$, namely, $e(x)$ is determined by
$${\exp}_{\mathbb{G}}^{-1}(x)=e(x)\cdot \nabla\equiv\sum_{j=1}^{n}e_{j}(x)X_{j}.$$

For $p=n$ or $p=Q$ the inequalities \eqref{HRn-p} and \eqref{iLpHardyeq0}
 fail for any constant. The critical versions of
\eqref{HRn-p-sh} with $p=n$ were investigated by Ioku, Ishiwata and Ozawa \cite{IIO}.
Their generalisations as well as a number of other critical (logarithmic) Hardy inequalities on homogeneous groups were obtained in recent works \cite{Ruzhansky-Suragan:critical}, \cite{Ruzhansky-Suragan:identities} and \cite{Ruzhansky-Suragan:uncertainty}.
Here we only mention a related family of logarithmic
Hardy inequalities
\begin{equation}\label{LH2p}
\qquad  \underset{R>0}{\sup}\left\|\frac{f-f_{R}}{|x|^{\frac{Q}{p}}{\log}\frac{R}{|x|}}
\right\|_{L^{p}(\mathbb{G})}\leq
\frac{p}{p-1}\left\| \frac{1}{|x|^{\frac{Q}{p}-1}}\left(\frac{e(x)}{|x|}\cdot A\nabla\right) f\right\|_{L^{p}(\mathbb{G})},
\end{equation}
for all $1< p<\infty$, where $f_{R}=f(R\frac{x}{|x|})$. We refer to
\cite{Ruzhansky-Suragan:critical} for further explanations and extensions but only mention here that for $p=Q$ the inequality \eqref{LH2p} gives a critical case of Hardy's inequalities \eqref{iLpHardyeq0}.

In Section \ref{Sec2} we give some necessary tools on homogeneous
groups and fix the notation. In Section \ref{Sec3} we present $L^{p}$-Caffarelli-Kohn-Nirenberg type inequalities on the homogeneous group $\mathbb{G}$ and then discuss their consequences and proofs. In Section \ref{Sec4} we discuss higher order cases.

\section{Preliminaries}
\label{Sec2}

In this standard preliminary section we very shortly recall some basics details of homogeneous groups.
The general analysis on homogeneous groups was developed by Folland and Stein in their book
\cite{FS-Hardy}, and we also refer for more recent developments to the monograph \cite{FR} by V\'eronique Fischer and the second named author.

It is known that a dilation family of a Lie algebra $\mathfrak{g}$
has the following representation
$$D_{\lambda}={\rm Exp}(A \,{\rm ln}\lambda)=\sum_{k=0}^{\infty}
\frac{1}{k!}({\rm ln}(\lambda) A)^{k},$$
where $A$ is a diagonalisable positive linear operator on $\mathfrak{g}$,
and each family of linear mappings $D_{\lambda}$ is a morphism of $\mathfrak{g}$,
i.e. a linear mapping
from $\mathfrak{g}$ to itself with the property
$$\forall X,Y\in \mathfrak{g},\, \lambda>0,\;
[D_{\lambda}X, D_{\lambda}Y]=D_{\lambda}[X,Y],$$
where $[X,Y]:=XY-YX$ is the Lie bracket.
Shortly, a {\em homogeneous group} is a connected simply connected Lie group whose
Lie algebra is equipped with dilations.
It induces the dilation structure on $\mathbb G$ which we continue to denote by $D_{\lambda}(x)$ or simply by $\lambda x$. We denote by
$$Q := {\rm Tr}\,A$$
the homogeneous dimension of $\mathbb G$.
We also recall that the standard Lebesque measure on $\mathbb R^{N}$ is the Haar measure for $\mathbb{G}$ (see, e.g. \cite[Proposition 1.6.6]{FR}).

Let us fix a basis $\{X_{1},\ldots,X_{n}\}$ of the Lie algebra $\mathfrak{g}$ of the homogeneous group $\mathbb{G}$
such that
$$AX_{k}=\nu_{k}X_{k}$$
for each $1\leq k\leq n$, so that $A$ can be taken to be
\begin{equation}
A={\rm diag}(\nu_{1},\ldots,\nu_{n}).
\end{equation}
Then each $X_{k}$ is homogeneous of degree $\nu_{k}$ and also
\begin{equation}
Q=\nu_{1}+\cdots+\nu_{n},
\end{equation}
which is called a homogeneous dimension of $\mathbb{G}$.
Since homogeneous groups are nilpotent   
the exponential mapping $\exp_{\mathbb G}:\mathfrak g\to\mathbb G$ is a global diffeomorphism.
In addition, the decomposition of ${\exp}_{\mathbb{G}}^{-1}(x)$ in the Lie algebra $\mathfrak g$ defines the vector
$$e(x)=(e_{1}(x),\ldots,e_{n}(x))$$
by the formula
$${\exp}_{\mathbb{G}}^{-1}(x)=e(x)\cdot \nabla\equiv\sum_{j=1}^{n}e_{j}(x)X_{j},$$
where $\nabla=(X_{1},\ldots,X_{n})$.
Alternatively, this means the equality
$$x={\exp}_{\mathbb{G}}\left(e_{1}(x)X_{1}+\ldots+e_{n}(x)X_{n}\right).$$
Using homogeneity we have
$$rx:=D_{r}(x)={\exp}_{\mathbb{G}}\left(r^{\nu_{1}}e_{1}(x)X_{1}+\ldots
+r^{\nu_{n}}e_{n}(x)X_{n}\right),$$
that is,
$$
e(rx)=(r^{\nu_{1}}e_{1}(x),\ldots,r^{\nu_{n}}e_{n}(x)).
$$
Since $r>0$ is arbitrary, without loss of generality taking $|x|=1$, a direct calculation shows that
\begin{align*}
\frac{d}{dr}f(rx) & =  \frac{d}{dr}f({\exp}_{\mathbb{G}}
\left(r^{\nu_{1}}e_{1}(x)X_{1}+\ldots
+r^{\nu_{n}}e_{n}(x)X_{n}\right)) \\
& =  \left[(\nu_{1}r^{\nu_{1}-1}e_{1}(x)X_{1}+\ldots
+\nu_{n}r^{\nu_{n}-1}e_{n}(x)X_{n})f\right](rx) \\
& =
\frac{1}{r}\left(e(rx)\cdot A\nabla
\right)f(rx).
\end{align*}
Using the notation
\begin{equation}\label{EQ:Euler}
\mathcal{R} :=\frac{e(x)}{|x|}\cdot A\nabla=\sum_{j=1}^{n} \nu_{j} \frac{e_{j}(x)}{|x|}X_{j},
\end{equation}
we obtain that
\begin{equation}\label{dfdr}
\frac{d}{dr}f(rx)=\frac{1}{r}\left(e(rx)\cdot A\nabla
\right)f(rx)=\mathcal{R} f(rx).
\end{equation}
That is, the operator $\mathcal{R}$ plays the role of the radial derivative on $\mathbb G$.
It follows from \eqref{EQ:Euler} that $\mathcal{R} $ is homogeneous of order $-1$.
It is known that every homogeneous group $\mathbb{G}$ admits a homogeneous
quasi-norm $|\cdot|$.
The $|\cdot|$-ball centred at $x\in\mathbb{G}$ with radius $R > 0$
can be defined by
$$B(x,R):=\{y\in \mathbb{G}: |x^{-1}y|<R\}.$$
The following polar decomposition was established in \cite{FS-Hardy}
(see also \cite[Section 3.1.7]{FR}).

\begin{prop}\label{polarinteg}
Let $\mathbb{G}$
be a homogeneous group equipped with a homogeneous
quasi-norm $\mid\cdot\mid$. Then there is a (unique)
positive Borel measure $\sigma$ on the
unit pseudo sphere
\begin{equation}\label{EQ:sphere}
\wp:=\{x\in \mathbb{G}:\,|x|=1\},
\end{equation}
such that for all $f\in L^{1}(\mathbb{G})$, we have
\begin{equation}
\int_{\mathbb{G}}f(x)dx=\int_{0}^{\infty}
\int_{\wp}f(ry)r^{Q-1}d\sigma(y)dr.
\end{equation}
\end{prop}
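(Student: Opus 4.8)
The plan is to construct $\sigma$ directly via a Riesz representation on the compact set $\wp$, verify the integration formula first for ``product'' functions, and then pass to all of $L^{1}(\mathbb{G})$ by a monotone class argument. The one structural input I would use repeatedly is that dilations scale the Haar measure in a definite way: since the Haar measure on $\mathbb{G}$ is the Lebesgue measure in exponential coordinates and $D_{t}={\rm Exp}(A\,\ln t)$ is linear with $\det D_{t}=t^{{\rm Tr}\,A}=t^{Q}$, we have $d(D_{t}x)=t^{Q}\,dx$ for every $t>0$. Note also that $x\mapsto|x|$ is continuous and positive on $\mathbb{G}\setminus\{0\}$, so $\wp$ is compact, the unit quasi-ball is bounded hence of finite measure, and $x\mapsto x/|x|:=D_{1/|x|}(x)$ is a continuous retraction of $\mathbb{G}\setminus\{0\}$ onto $\wp$; in particular every $x\neq0$ has the unique polar representation $x=D_{|x|}(x/|x|)=ry$ with $r=|x|>0$ and $y=x/|x|\in\wp$.

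First I would define a positive linear functional on $C(\wp)$ by
\begin{equation*}
I(\phi):=Q\int_{\{0<|x|\le1\}}\phi\!\left(\frac{x}{|x|}\right)dx,\qquad \phi\in C(\wp),
\end{equation*}
where $\phi$ is extended off $\wp$ as a function homogeneous of degree $0$. Positivity and boundedness are clear from the finiteness of the unit ball, so the Riesz representation theorem yields a unique positive Borel measure $\sigma$ on $\wp$ with $I(\phi)=\int_{\wp}\phi\,d\sigma$.

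The heart of the argument is a scaling identity for the truncated distribution function. For fixed $\phi\ge0$ set $F(t):=\int_{\{0<|x|\le t\}}\phi(x/|x|)\,dx$. Changing variables by $x=D_{t}z$, using $|D_{t}z|=t|z|$, the degree-$0$ homogeneity $\phi(x/|x|)=\phi(z/|z|)$, and $dx=t^{Q}\,dz$, gives $F(t)=t^{Q}F(1)=\frac{t^{Q}}{Q}\int_{\wp}\phi\,d\sigma$, whence $F'(t)=t^{Q-1}\int_{\wp}\phi\,d\sigma$. Since $F(t)$ is precisely the measure of $\{|x|\le t\}$ against $\phi(x/|x|)\,dx$, the pushforward of this measure under $x\mapsto|x|$ has density $t^{Q-1}\int_{\wp}\phi\,d\sigma$, so for $g\in C_{c}((0,\infty))$,
\begin{equation*}
\int_{\mathbb{G}}g(|x|)\,\phi\!\left(\frac{x}{|x|}\right)dx=\int_{0}^{\infty}g(t)\,dF(t)=\int_{0}^{\infty}g(t)\,t^{Q-1}\,dt\int_{\wp}\phi\,d\sigma,
\end{equation*}
which is the claimed formula for $f(x)=g(|x|)\phi(x/|x|)$; a general $\phi\in C(\wp)$ follows by splitting into positive and negative parts.

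Finally I would pass from product functions to arbitrary $f\in L^{1}(\mathbb{G})$. Finite linear combinations of products $g(|x|)\phi(x/|x|)$ form a point-separating algebra on $(0,\infty)\times\wp$, so a standard monotone class (or Stone--Weierstrass together with dominated convergence) argument upgrades the identity to all $f\in L^{1}(\mathbb{G})$, both sides being $\sigma$-finite. Uniqueness of $\sigma$ is then immediate: testing against $f(x)=g(|x|)\phi(x/|x|)$ with $\int_{0}^{\infty}g(t)t^{Q-1}dt=1$ determines $\int_{\wp}\phi\,d\sigma$ for every $\phi\in C(\wp)$, and Riesz uniqueness finishes. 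I expect the main obstacle to be bookkeeping rather than conceptual: since the quasi-norm is only assumed continuous (not smooth), one should avoid computing a Jacobian for the polar map and instead argue at the level of measures through the change of variables $x=D_{t}z$, the scaling $F(t)=t^{Q}F(1)$, and the pushforward identity $\int g(|x|)\,d\mu=\int g\,dF$, and one must be careful with the density step so that Fubini genuinely applies.
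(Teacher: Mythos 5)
The paper itself offers no proof of this proposition, quoting it from Folland--Stein \cite{FS-Hardy} and from Section 3.1.7 of \cite{FR}, and your argument is a correct reconstruction of essentially that standard proof: Riesz representation for a positive linear functional on the unit quasi-sphere, the dilation-scaling identity $F(t)=t^{Q}F(1)$ coming from $d(D_{t}x)=t^{Q}\,dx$, and a Stone--Weierstrass/monotone-class passage from product functions $g(|x|)\phi(x/|x|)$ to all of $L^{1}(\mathbb{G})$. The one step you state too casually is the compactness of $\wp$ and the finiteness of the measure of the unit quasi-ball: these do not follow from continuity and positivity of $|\cdot|$ alone, but from the standard fact that every homogeneous quasi-norm is equivalent to a proper gauge built from a Euclidean norm adapted to the dilations (equivalence of all homogeneous quasi-norms), which you should invoke explicitly rather than treat as immediate.
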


\section{$L^{p}$-Caffarelli-Kohn-Nirenberg type inequalities and consequences}
\label{Sec3}
In this section and in the sequel we adopt all the notation introduced in Section \ref{Sec2} concerning homogeneous groups and the operator $\mathcal{R}$. We formulate the following $L^{p}$-Caffarelli-Kohn-Nirenberg type inequalities on the homogeneous group $\mathbb{G}$ and then discuss their consequences and proofs.

\begin{thm}\label{LpCKN}
Let $\mathbb{G}$ be a homogeneous group
of homogeneous dimension $Q$ and let $\alpha,\,\beta\in \mathbb{R}$.
Then for all complex-valued functions $f\in C^{\infty}_{0}(\mathbb{G}\backslash\{0\}),$ $1<p<\infty,$
and any homogeneous quasi-norm $|\cdot|$ on $\mathbb{G}$ we have
\begin{equation}\label{eqLpCKN}
\frac{|Q-\gamma|}{p}
\left\|\frac{f}{|x|^{\frac{\gamma}{p}}}\right\|^{p}_{L^{p}(\mathbb{G})}\leq
\left\|\frac{1}{|x|^{\alpha}}\mathcal{R} f\right\|_{L^{p}(\mathbb{G})}\left\|\frac{f}{|x|^{\frac{\beta}{p-1}}}\right\|^{p-1}_{L^{p}(\mathbb{G})},
\end{equation}
where $\gamma=\alpha+\beta+1$. If $\gamma\neq Q$ then the constant $\frac{|Q-\gamma|}{p}$ is sharp.
\end{thm}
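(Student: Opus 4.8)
The plan is to reduce the multidimensional inequality to a family of one-dimensional estimates along the dilation rays $r\mapsto ry$ through the origin, using the polar decomposition of Proposition \ref{polarinteg} together with the fact \eqref{dfdr} that $\mathcal{R}$ acts as $\frac{d}{dr}$ along such an orbit. First I would write, for $|y|=1$,
\begin{equation*}
\left\|\frac{f}{|x|^{\gamma/p}}\right\|_{L^{p}(\mathbb{G})}^{p}=\int_{\wp}\int_{0}^{\infty}|f(ry)|^{p}\,r^{Q-\gamma-1}\,dr\,d\sigma(y),
\end{equation*}
and integrate by parts in $r$ against the primitive $r^{Q-\gamma}/(Q-\gamma)$. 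Since $f\in C_{0}^{\infty}(\mathbb{G}\setminus\{0\})$, the map $r\mapsto f(ry)$ vanishes for $r$ near $0$ and for $r$ large, so all boundary terms drop; using $\frac{d}{dr}|f(ry)|^{p}=p|f(ry)|^{p-2}\mathrm{Re}(\overline{f(ry)}\,\mathcal{R}f(ry))$ via \eqref{dfdr} I obtain the exact identity
\begin{equation*}
\frac{|Q-\gamma|}{p}\int_{0}^{\infty}|f(ry)|^{p}r^{Q-\gamma-1}\,dr=\left|\int_{0}^{\infty}|f(ry)|^{p-2}\mathrm{Re}\bigl(\overline{f(ry)}\,\mathcal{R}f(ry)\bigr)r^{Q-\gamma}\,dr\right|.
\end{equation*}

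Next I would bound the integrand pointwise by $|f(ry)|^{p-1}|\mathcal{R}f(ry)|$ and integrate over $\wp$, so that it remains to estimate $\int_{\wp}\int_{0}^{\infty}|f(ry)|^{p-1}|\mathcal{R}f(ry)|\,r^{Q-\gamma}\,dr\,d\sigma(y)$. The decisive step is to split the weight $r^{Q-\gamma}=r^{a}\cdot r^{b}$ with
\begin{equation*}
a=\frac{Q-1}{p}-\alpha,\qquad b=\frac{(Q-1)(p-1)}{p}-\beta,\qquad a+b=Q-\gamma,
\end{equation*}
and apply H\"older's inequality on $\wp\times(0,\infty)$ with exponents $p$ and $p/(p-1)$, pairing $|\mathcal{R}f(ry)|r^{a}$ with $|f(ry)|^{p-1}r^{b}$. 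The powers are chosen precisely so that the $p$-th power of the first factor reproduces $|\mathcal{R}f|^{p}r^{Q-1-\alpha p}$ and the $p/(p-1)$-th power of the second reproduces $|f|^{p}r^{Q-1-\beta p/(p-1)}$; the identity $a+b=Q-\gamma$ is exactly the consistency condition $\gamma=\alpha+\beta+1$. Reassembling the two resulting factors back into $\mathbb{G}$-integrals through Proposition \ref{polarinteg} yields the right-hand side $\|\,|x|^{-\alpha}\mathcal{R}f\|_{L^{p}(\mathbb{G})}\,\|\,|x|^{-\beta/(p-1)}f\|_{L^{p}(\mathbb{G})}^{p-1}$ and completes the proof of \eqref{eqLpCKN}.

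For sharpness (where $\gamma\neq Q$, so $Q-\gamma\neq 0$) I would track the two places where the argument is not an equality. The integration-by-parts step is an exact identity, and the only genuine inequalities are the passage to the absolute value and H\"older's inequality; both are saturated along radial profiles $f(x)=\phi(|x|)$ with $\phi>0$ real and monotone. Restricting to such $f$ makes the sphere measure $\sigma(\wp)$ cancel and reduces the whole statement to a one-variable weighted inequality for $\phi$, whose H\"older equality condition forces $|\phi'(r)/\phi(r)|=c\,r^{\mu}$ with $\mu=\alpha-\beta/(p-1)$, i.e. a power profile when $\mu=-1$ and a stretched-exponential profile $\phi(r)=\exp(-c'r^{\mu+1})$ otherwise. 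I would therefore take such an extremiser, truncate it by smooth cut-offs to place it in $C_{0}^{\infty}(\mathbb{G}\setminus\{0\})$, and let the cut-off scales tend to $0$ and $\infty$, checking that the three weighted integrals remain finite and comparable and that the ratio of the two sides of \eqref{eqLpCKN} converges to $\frac{|Q-\gamma|}{p}$.

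I expect the main obstacle to be the sharpness part rather than the inequality itself: the exact extremal profile is inadmissible, being neither compactly supported nor vanishing near the origin, so one must control the error introduced by the cut-offs, ensuring that the boundary contributions and the H\"older defect both vanish in the limit while all three weighted integrals stay finite and mutually comparable. The derivation of \eqref{eqLpCKN} is essentially forced once the weight-splitting exponents $a,b$ are identified, so the analytic care is concentrated entirely in the extremising sequence.
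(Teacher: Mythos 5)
Your proof follows essentially the same route as the paper's: polar decomposition, integration by parts in $r$ against $r^{Q-\gamma}$, the pointwise bound by $|f|^{p-1}|\mathcal{R}f|$, and H\"older's inequality with exactly the weight splitting that reproduces the two norms, while your extremal profiles $\exp(-c'r^{\mu+1})$ (and $r^{-C}$ when $\mu=-1$) are precisely the functions \eqref{EQ:fs} that the paper uses to saturate the H\"older equality condition. The only differences are cosmetic (you apply H\"older in polar coordinates on $\wp\times(0,\infty)$ rather than directly on $\mathbb{G}$), and your truncation plan for the sharpness part is, if anything, more careful than the paper's own argument, which simply exhibits the equality condition.
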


In the abelian case ${\mathbb G}=(\mathbb R^{n},+)$, we have
$Q=n$, $e(x)=x=(x_{1},\ldots,x_{n})$, so for any homogeneous quasi-norm $|\cdot|$ on $\mathbb R^{n}$ \eqref{eqLpCKN} implies
a new inequality with the optimal constant:
\begin{equation}\label{Hardy-r}
\frac{|n-\gamma|}{p}
\left\|\frac{f}{|x|^{\frac{\gamma}{p}}}\right\|^{p}_{L^{p}(\mathbb{R}^{n})}\leq
\left\|\frac{1}{|x|^{\alpha}}\frac{x}{|x|}\cdot\nabla f\right\|_{L^{p}(\mathbb{R}^{n})}
\left\|\frac{f}{|x|^{\frac{\beta}{p-1}}}
\right\|^{p-1}_{L^{p}(\mathbb{R}^{n})},
\end{equation}
which in turn, by using Schwarz's inequality with the standard Euclidean distance $\|x\|=\sqrt{x^{2}_{1}+\ldots+x^{2}_{n}}$, implies the $L^{p}$-Caffarelli-Kohn-Nirenberg type inequality (see \cite{DJSJ-2013} and \cite{Costa-2008}) for $\mathbb{G}\equiv\mathbb{R}^{n}$ with the sharp constant:
\begin{equation}\label{CKN}
\frac{|n-\gamma|}{p}
\left\|\frac{f}{\|x\|^{\frac{\gamma}{p}}}\right\|^{p}_{L^{p}(\mathbb{R}^{n})}\leq
\left\|\frac{1}{\|x\|^{\alpha}}\nabla f\right\|_{L^{p}(\mathbb{R}^{n})}\left\|\frac{f}{\|x\|^{\frac{\beta}{p-1}}}\right\|^{p-1}_{L^{p}(\mathbb{R}^{n})},
\end{equation}
for all $f\in C_{0}^{\infty}(\mathbb{R}^{n}\backslash\{0\}).$

When $\alpha=0,\, \beta=p-1$ and $1<p<Q$, the inequality \eqref{eqLpCKN} gives the homogeneous group version of $L^{p}$-Hardy inequality
\begin{equation}\label{47-1}
\left\|\frac{1}{|x|}f
\right\|_{L^{p}(\mathbb{G})}\leq\frac{p}{Q-p}\left\|\mathcal{R} f\right\|_{L^{p}(\mathbb{G})},
\end{equation}
again with $\frac{p}{Q-p}$ being the best constant
(see \cite{Ruzhansky-Suragan:critical}-\cite{Ruzhansky-Suragan:horizontal} for weighted, critical, higher order cases, horizontal cases and their applications in different settings).
Note that in comparison to stratified (Carnot) group versions, here the constant is best
for any homogeneous quasi-norm $|\cdot|$.

In the abelian case ${\mathbb G}=(\mathbb R^{n},+)$, $n\geq 3$, we have
$Q=n$, $e(x)=x=(x_{1},\ldots,x_{n})$, so for any quasi-norm $|\cdot|$ on $\mathbb R^{n}$ \eqref{47-1} implies
the new inequality:
\begin{equation}\label{Hardy-p}
\left\|\frac{f}{|x|}\right\|_{L^{p}(\mathbb{R}^{n})}\leq \frac{p}{n-p}
\left\|\frac{x}{|x|}\cdot\nabla f\right\|_{L^{p}
(\mathbb{R}^{n})}.
\end{equation}
In turn, by using Schwarz's inequality with the standard Euclidean distance $\|x\|=\sqrt{x^{2}_{1}+\ldots+x^{2}_{n}}$, it implies the classical Hardy inequality for $\mathbb{G}\equiv\mathbb{R}^{n}$:
\begin{equation*}\label{Hardy}
\left\|\frac{f}{\|x\|}\right\|_{L^{p}(\mathbb{R}^{n})}\leq
\frac{p}{n-p}\left\|\nabla f\right\|_{L^{p}(\mathbb{R}^{n})},
\end{equation*}
for all $f\in C_{0}^{\infty}(\mathbb{R}^{n}\backslash\{0\}).$
When $|x|\equiv\|x\|$ the remainder terms for \eqref{Hardy-p}, that is, the exact formulae of the difference between the right hand side and the left hand side of the inequality, have been analysed by Ioku, Ishiwata and Ozawa
\cite{IIO:Lp-Hardy}, see also Machihara, Ozawa and Wadade \cite{MOW:Hardy-Hayashi} as well as \cite{IIO}.

The inequality \eqref{47-1} also implies the following Heisenberg-Pauli-Weyl type  uncertainly principle on homogeneous groups (see e.g. \cite{Ciatti-Cowling-Ricci}, \cite{Ruzhansky-Suragan:Hardy} and \cite{Ruzhansky-Suragan:Layers} for versions of abelian and stratified groups):
For each $f\in C^{\infty}_{0}(\mathbb{G}\backslash\{0\})$ and any homogeneous quasi-norm $|\cdot|$ on $\mathbb{G}$, using H\"older's inequality and \eqref{47-1}, we have
\begin{multline}
\left\|f\right\|^{2}_{L^{2}(\mathbb{G})}
\leq \left\|\frac{1}{|x|} f\right\|_{L^{p}(\mathbb{G})}\left\||x| f\right\|_{L^{\frac{p}{p-1}}(\mathbb{G})}
\\\leq\frac{p}{Q-p}\left\|\mathcal{R} f\right\|_{L^{p}(\mathbb{G})}\left\||x| f\right\|_{L^{\frac{p}{p-1}}(\mathbb{G})},\quad 1<p<Q,
\end{multline}
that is,
\begin{equation}\label{UP1p}
\left\|f\right\|^{2}_{L^{2}(\mathbb{G})}
\leq\frac{p}{Q-p}\left\|\mathcal{R} f\right\|_{L^{p}(\mathbb{G})}\left\||x| f\right\|_{L^{\frac{p}{p-1}}(\mathbb{G})},\quad 1<p<Q.
\end{equation}
In the abelian case ${\mathbb G}=(\mathbb R^{n},+)$, taking
$Q=n$ and $e(x)=x$, we obtain that \eqref{UP1p} with $p=2$ implies
the uncertainly principle with any quasi-norm $|x|$:
\begin{equation}\label{UPRn-2}
\left(\int_{\mathbb R^{n}}
 |u(x)|^{2} dx\right)^{2}\leq\left(\frac{2}{n-2}\right)^{2}\int_{\mathbb R^{n}}\left|\frac{x}{|x|}\cdot\nabla u(x)\right|^{2}dx
\int_{\mathbb R^{n}} |x|^{2} |u(x)|^{2}dx.
\end{equation}
In turn it implies
the classical
uncertainty principle for $\mathbb{G}\equiv\mathbb R^{n}$ with the standard Euclidean distance $\|x\|$:
\begin{equation*}\label{UPRn}
\left(\int_{\mathbb R^{n}}
 |u(x)|^{2} dx\right)^{2}\leq\left(\frac{2}{n-2}\right)^{2}\int_{\mathbb R^{n}}|\nabla u(x)|^{2}dx
\int_{\mathbb R^{n}} \|x\|^{2} |u(x)|^{2}dx,
\end{equation*}
which is the Heisenberg-Pauli-Weyl uncertainly principle on $\mathbb R^{n}$.

On the other hand, directly from the inequality \eqref{eqLpCKN} we can obtain
a number of Heisenberg-Pauli-Weyl type uncertainly inequities which have
various consequences and aplications.
For example, when $\alpha p=\alpha+\beta+1$, we have
\begin{equation}\label{HPW1}
\frac{|Q-\alpha p|}{p}\left\|\frac{f}{|x|^{\alpha}}\right\|^{p}_{L^{p}(\mathbb{G})}
\leq\left\|\frac{\mathcal{R} f}{|x|^{\alpha}}\right\|_{L^{p}(\mathbb{G})}\left\||x|^{\frac{1}{p-1}}
\frac{f}{|x|^{\alpha}}
\right\|^{p-1}_{L^{p}(\mathbb{G})},
\end{equation}
and, on the other hand, if $0=\alpha+\beta+1$ and $\alpha=-p$ then
\begin{equation}\label{HPW2}
\frac{Q}{p}\left\|f\right\|^{p}_{L^{p}(\mathbb{G})}
\leq\left\||x|^{p}\mathcal{R} f\right\|_{L^{p}(\mathbb{G})}\left\| \frac{f}{|x|}\right\|^{p-1}_{L^{p}(\mathbb{G})},
\end{equation}
all with sharp constants.

\begin{proof}[Proof of Theorem \ref{LpCKN}]
We may assume that $\gamma\neq Q$ since for $\gamma=Q$ there is nothing to prove. Introducing polar coordinates $(r,y)=(|x|, \frac{x}{\mid x\mid})\in (0,\infty)\times\wp$ on $\mathbb{G}$, where $\wp$ is the pseudo-sphere in \eqref{EQ:sphere}, and using Proposition \ref{polarinteg}
one calculates
$$
\int_{\mathbb{G}}
\frac{|f(x)|^{p}}
{|x|^{\gamma}}dx
=\int_{0}^{\infty}\int_{\wp}
\frac{|f(ry)|^{p}}
{r^{\gamma}}r^{Q-1}d\sigma(y)dr
$$
$$
=\frac{1}{Q-\gamma}\int_{0}^{\infty}\int_{\wp}
|f(ry)|^{p} \frac{d\,r^{Q-\gamma}}{dr}d\sigma(y)dr
$$$$=-\frac{1}{Q-\gamma}{\rm Re}\int_{0}^{\infty}\int_{\wp} pf(ry)|f(ry)|^{p-2}\left(\overline{\frac{df(ry)}{dr}}\right)\frac{1}{r^{\gamma-1}}
r^{Q-1}d\sigma(y)dr
$$
$$
=-\frac{p}{Q-\gamma} {\rm Re}\int_{\mathbb{G}}
f(x)\frac{|f(x)|^{p-2}}{|x|^{\gamma-1}}
\left(\overline{\frac{1}{|x|}\left(e(x)\cdot A\nabla
\right)f(x)}\right)dx
$$
$$
\leq\left|\frac{p}{Q-\gamma}\right| \int_{\mathbb{G}}
\frac{|f(x)|^{p-1}}{|x|^{\gamma-1}}
\left|\frac{1}{|x|}\left(e(x)\cdot A\nabla
\right)f(x)\right|dx
$$
$$
=\left|\frac{p}{Q-\gamma}\right| \int_{\mathbb{G}}
\frac{|f(x)|^{p-1}}{|x|^{\alpha+\beta}}
\left|\mathcal{R}f(x)\right|dx
$$
$$
\leq \left|\frac{p}{Q-\gamma}\right| \left(\int_{\mathbb{G}}
\frac{\left|\mathcal{R}f(x)\right|^{p}}{|x|^{\alpha p}}
dx\right)^{\frac{1}{p}} \left(\int_{\mathbb{G}}
\frac{|f(x)|^{p}}{|x|^{\frac{\beta p}{p-1}}}
dx\right)^{\frac{p-1}{p}},
$$
where we have used H\"older's inequality. Thus, we arrive at
\begin{equation}\label{calcul}
\left|\frac{Q-\gamma}{p}\right| \int_{\mathbb{G}}
\frac{|f(x)|^{p}}
{|x|^{\gamma}}dx \leq \left(\int_{\mathbb{G}}
\frac{\left|\mathcal{R}f(x)\right|^{p}}{|x|^{\alpha p}}
dx\right)^{\frac{1}{p}} \left(\int_{\mathbb{G}}
\frac{|f(x)|^{p}}{|x|^{\frac{\beta p}{p-1}}}
dx\right)^{\frac{p-1}{p}}.
\end{equation}

Now let us show the sharpness of the constant. We need to examine the equality
condition in above H\"older's inequality as in the Euclidean case (see \cite{DJSJ-2013}).
Consider the function
\begin{equation}
g(x)=\left\{
\begin{array}{ll}
    e^{-\frac{C}{\lambda}|x|^{\lambda}},\quad \lambda:=\alpha-\frac{\beta}{p-1}+1\neq0,\\
    \frac{1}{|x|^{C}},\quad \alpha-\frac{\beta}{p-1}+1=0, \\
\end{array}
\right.
\label{EQ:fs}
\end{equation}
where $C=\left|\frac{Q-\gamma}{p}\right|$ and $\gamma\neq Q.$
Then it can be checked that
\begin{equation}
\left|\frac{p}{Q-\gamma}\right|^{p}
\frac{\left|\mathcal{R}g(x)\right|^{p}}{|x|^{\alpha p}}=
\frac{|g(x)|^{p}}{|x|^{\frac{\beta p}{p-1}}},
\end{equation}
which satisfies the equality
condition in H\"older's inequality.
This shows that the constant $C=\left|\frac{Q-\gamma}{p}\right|$ is sharp.
\end{proof}

\section{Higher order cases}
\label{Sec4}
In this section we shortly discuss that by iterating the established $L^{p}$-Caffarelli-Kohn-Nirenberg type inequalities one can get inequalities of higher order. To start let us consider in \eqref{eqLpCKN} the case
$$\beta=\gamma\left(1-\frac{1}{p}\right),$$
that is, taking $\beta=(\alpha+1)(p-1)$ the inequality
\eqref{eqLpCKN} implies that $\gamma=p(\alpha+1)$ and

\begin{equation}\label{Lpweighted}
\left\|\frac{f}{|x|^{\alpha+1}}\right\|_{L^{p}(\mathbb{G})}\leq \frac{p}{|Q-p(\alpha+1)|}
\left\|\frac{1}{|x|^{\alpha}}\mathcal{R} f\right\|_{L^{p}(\mathbb{G})},\quad 1<p<\infty,
\end{equation}
for any $f\in C^{\infty}_{0}(\mathbb{G}\backslash\{0\})$ and all $\alpha\in\mathbb{R}$ with $\alpha\neq \frac{Q}{p}-1.$

Now putting $\mathcal{R} f$ instead of $f$ and $\alpha-1$
instead of $\alpha$ in \eqref{Lpweighted} we consequently have

$$
\left\|\frac{\mathcal{R} f}{|x|^{\alpha}}\right\|_{L^{p}(\mathbb{G})}\leq \frac{p}{|Q-p\alpha|}
\left\|\frac{1}{|x|^{\alpha-1}}\mathcal{R}^{2} f\right\|_{L^{p}(\mathbb{G})},
$$
for $\alpha\neq \frac{Q}{p}.$
Combining it with \eqref{Lpweighted} we get

\begin{equation}\label{iter1}
\left\|\frac{f}{|x|^{\alpha+1}}\right\|_{L^{p}(\mathbb{G})}\leq \frac{p}{|Q-p(\alpha+1)|}\frac{p}{|Q-p\alpha)|}
\left\|\frac{1}{|x|^{\alpha-1}}\mathcal{R}^{2} f\right\|_{L^{p}(\mathbb{G})},
\end{equation}
for each $\alpha\in\mathbb{R}$ such that $\alpha\neq \frac{Q}{p}-1$ and $\alpha\neq \frac{Q}{p}.$
This iteration process gives

\begin{equation}\label{Lph1}
\left\|\frac{f}{|x|^{\theta+1}}\right\|_{L^{p}(\mathbb{G})}\leq A_{\theta,k}
\left\|\frac{1}{|x|^{\theta+1-k}}\mathcal{R}^{k} f\right\|_{L^{p}(\mathbb{G})},\quad 1<p<\infty,
\end{equation}
for any $f\in C^{\infty}_{0}(\mathbb{G}\backslash\{0\})$ and all $\theta\in\mathbb{R}$ such that $\prod_{j=0}^{k-1}
\left|Q-p(\theta+1-j)\right|\neq0,$ and
$$A_{\theta,k}:=p^{k}\left[\prod_{j=0}^{k-1}
\left|Q-p(\theta+1-j)\right|\right]^{-1}.$$
Similarly, we have
\begin{equation}\label{Lph2}
\left\|\frac{\mathcal{R}f}{|x|^{\vartheta+1}}\right\|_{L^{p}(\mathbb{G})}\leq A_{\vartheta,m}
\left\|\frac{1}{|x|^{\vartheta+1-m}}\mathcal{R}^{m+1} f\right\|_{L^{p}(\mathbb{G})},\quad 1<p<\infty,
\end{equation}
for any $f\in C^{\infty}_{0}(\mathbb{G}\backslash\{0\})$ and all $\vartheta\in\mathbb{R}$ such that $\prod_{j=0}^{m-1}
\left|Q-p(\vartheta+1-j)\right|\neq0,$ and
$$A_{\vartheta,m}:=p^{m}\left[\prod_{j=0}^{m-1}
\left|Q-p(\vartheta+1-j)\right|\right]^{-1}.$$
Now putting $\vartheta+1=\alpha$ and $\theta+1=\frac{\beta}{p-1}$  into \eqref{Lph2} and \eqref{Lph1}, respectively, from \eqref{eqLpCKN} we obtain

\begin{prop}
Let $1<p<\infty.$
For any $k,m\in \mathbb{N}$ we have
\begin{equation}\label{Lphighorder}
\frac{|Q-\gamma|}{p}
\left\|\frac{f}{|x|^{\frac{\gamma}{p}}}\right\|^{p}_{L^{p}
(\mathbb{G})}\leq
\widetilde{A}_{\alpha,m}\widetilde{A}_{\beta,k}\left\|\frac{1}{|x|^{\alpha-m}}\mathcal{R}^{m+1} f\right\|_{L^{p}(\mathbb{G})}\left\|\frac{1}{|x|^{\frac{\beta}{p-1}-k}}\mathcal{R}^{k}f\right\|^{p-1}_{L^{p}(\mathbb{G})},
\end{equation}
for any complex-valued function $f\in C^{\infty}_{0}(\mathbb{G}\backslash\{0\})$,  $\gamma=\alpha+\beta+1$, and $\alpha\in\mathbb{R}$ such that $\prod_{j=0}^{m-1}
\left|Q-p(\alpha-j)\right|\neq0,$ and
$$\widetilde{A}_{\alpha,m}:=p^{m}\left[\prod_{j=0}^{m-1}
\left|Q-p(\alpha-j)\right|\right]^{-1},$$
as well as $\beta\in\mathbb{R}$ such that $\prod_{j=0}^{k-1}
\left|Q-p(\frac{\beta}{p-1}-j)\right|\neq0,$ and
$$\widetilde{A}_{\beta,k}:=p^{k(p-1)}\left[\prod_{j=0}^{k-1}
\left|Q-p\left(\frac{\beta}{p-1}-j\right)\right|\right]^{-(p-1)}.$$
\end{prop}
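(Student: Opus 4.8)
The plan is to deduce \eqref{Lphighorder} directly from the base inequality \eqref{eqLpCKN} of Theorem \ref{LpCKN} by estimating its two right-hand factors separately, using the iterated weighted inequalities \eqref{Lph1} and \eqref{Lph2} that are already at our disposal. Recall that \eqref{eqLpCKN} reads
\begin{equation*}
\frac{|Q-\gamma|}{p}\left\|\frac{f}{|x|^{\gamma/p}}\right\|_{L^{p}(\mathbb{G})}^{p} \le \left\|\frac{1}{|x|^{\alpha}}\mathcal{R}f\right\|_{L^{p}(\mathbb{G})}\left\|\frac{f}{|x|^{\beta/(p-1)}}\right\|_{L^{p}(\mathbb{G})}^{p-1},
\end{equation*}
with $\gamma=\alpha+\beta+1$, so it suffices to replace $\mathcal{R}f$ by $\mathcal{R}^{m+1}f$ in the first factor and $f$ by $\mathcal{R}^{k}f$ in the second, at the cost of the claimed constants.

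For the first factor I would invoke \eqref{Lph2} with the choice $\vartheta+1=\alpha$ (and general $m$), which gives
\begin{equation*}
\left\|\frac{\mathcal{R}f}{|x|^{\alpha}}\right\|_{L^{p}(\mathbb{G})} \le A_{\vartheta,m}\left\|\frac{1}{|x|^{\alpha-m}}\mathcal{R}^{m+1}f\right\|_{L^{p}(\mathbb{G})},
\end{equation*}
where, under the substitution $\vartheta+1=\alpha$, the constant $A_{\vartheta,m}=p^{m}[\prod_{j=0}^{m-1}|Q-p(\alpha-j)|]^{-1}$ is exactly $\widetilde{A}_{\alpha,m}$, valid provided $\prod_{j=0}^{m-1}|Q-p(\alpha-j)|\neq0$.

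For the second factor I would invoke \eqref{Lph1} with $\theta+1=\frac{\beta}{p-1}$ (and general $k$), yielding
\begin{equation*}
\left\|\frac{f}{|x|^{\beta/(p-1)}}\right\|_{L^{p}(\mathbb{G})} \le A_{\theta,k}\left\|\frac{1}{|x|^{\beta/(p-1)-k}}\mathcal{R}^{k}f\right\|_{L^{p}(\mathbb{G})},
\end{equation*}
with $A_{\theta,k}=p^{k}[\prod_{j=0}^{k-1}|Q-p(\frac{\beta}{p-1}-j)|]^{-1}$ under the stated non-vanishing condition. Raising this inequality to the power $p-1$ produces the factor $A_{\theta,k}^{p-1}=p^{k(p-1)}[\prod_{j=0}^{k-1}|Q-p(\frac{\beta}{p-1}-j)|]^{-(p-1)}=\widetilde{A}_{\beta,k}$. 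Substituting both estimates into \eqref{eqLpCKN} and collecting the constant $\widetilde{A}_{\alpha,m}\widetilde{A}_{\beta,k}$ in front gives precisely \eqref{Lphighorder}.

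The main obstacle here is not analytic but one of bookkeeping: I must check that the index shifts match, so that the weight exponents $\alpha-m$ and $\frac{\beta}{p-1}-k$ and the operator powers $\mathcal{R}^{m+1}$, $\mathcal{R}^{k}$ line up correctly, and that the admissibility hypotheses $\prod_{j=0}^{m-1}|Q-p(\alpha-j)|\neq0$ and $\prod_{j=0}^{k-1}|Q-p(\frac{\beta}{p-1}-j)|\neq0$ guarantee that every intermediate constant is finite. This in turn rests on the fact, established by iterating \eqref{Lpweighted} (replacing $f$ by $\mathcal{R}f$ and $\alpha$ by $\alpha-1$ at each step, a routine induction on $k$ resp.\ $m$), that \eqref{Lph1} and \eqref{Lph2} hold with exactly the product constants $A_{\theta,k}$ and $A_{\vartheta,m}$; once this is granted, the remaining argument is the single substitution described above.
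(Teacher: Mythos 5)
Your proposal is correct and coincides with the paper's own argument: the authors likewise obtain \eqref{Lph1} and \eqref{Lph2} by iterating the weighted inequality \eqref{Lpweighted}, then substitute $\vartheta+1=\alpha$ and $\theta+1=\frac{\beta}{p-1}$ and feed both bounds (the second raised to the power $p-1$) into \eqref{eqLpCKN}. Your bookkeeping of the constants $\widetilde{A}_{\alpha,m}=A_{\vartheta,m}$ and $\widetilde{A}_{\beta,k}=A_{\theta,k}^{p-1}$ matches the paper exactly.
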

We also highlight the case $p=2$. In this case an interesting feature is that when we have the exact formula for the remainder which yields the sharpness of the constants as well.
We first recall the following estimate and formula.

\begin{thm}[\cite{Ruzhansky-Suragan:identities}]\label{H-high}
Let $Q\geq 3$, $\alpha\in\mathbb{R}$ and $k\in\mathbb N$ be such that we have $\prod_{j=0}^{k-1}
\left|\frac{Q-2}{2}-(\alpha+j)\right|\neq0$.
Then for all complex-valued functions $f\in C^{\infty}_{0}(\mathbb{G}\backslash\{0\})$
we have
\begin{equation}\label{EQ:high-order1}
\left\|\frac{f}{|x|^{k+\alpha}}
\right\|_{L^{2}(\mathbb{G})}\leq
\left[\prod_{j=0}^{k-1}
\left|\frac{Q-2}{2}-(\alpha+j)\right|\right]^{-1}\left\|\frac{1}{|x|^{\alpha}}\mathcal{R} ^{k}f\right\|_{L^{2}(\mathbb{G})},
\end{equation}
where the constant above is sharp, and is attained if and only if $f=0$.

Moreover, for all $k\in\mathbb N$ and $\alpha\in\mathbb{R}$, the following equality holds:
\begin{multline}\label{equality-high-rem}
\left\|\frac{1}{|x|^{\alpha}}\mathcal{R} ^{k}f\right\|^{2}_{L^{2}(\mathbb{G})}=
\left[\prod_{j=0}^{k-1}
\left(\frac{Q-2}{2}-(\alpha+j)\right)^{2}\right]\left\|\frac{f}{|x|^{k+\alpha}}
\right\|^{2}_{L^{2}(\mathbb{G})}
\\+\sum_{l=1}^{k-1}\left[\prod_{j=0}^{l-1}
\left(\frac{Q-2}{2}-(\alpha+j)\right)^{2}\right]\left\|\frac{1}{|x|^{l+\alpha}}\mathcal{R} ^{k-l}f+
\frac{Q-2(l+1+\alpha)}{2|x|^{l+1+\alpha}}\mathcal{R} ^{k-l-1}f\right\|^{2}_{L^{2}(\mathbb{G})}
\\
+\left\|\frac{1}{|x|^{\alpha}}\mathcal{R} ^{k}f+\frac{Q-2-2\alpha}{2|x|^{1+\alpha}}\mathcal{R} ^{k-1}f \right\|^{2}_{L^{2}(\mathbb{G})}.
\end{multline}
\end{thm}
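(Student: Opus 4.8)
The plan is to reduce the whole statement to a one-dimensional weighted identity via the polar decomposition of Proposition \ref{polarinteg}, to prove a single first-order ``completion of the square'' identity for the radial operator $\mathcal{R}$, and then to iterate it $k$ times: the iteration produces the remainder identity \eqref{equality-high-rem} directly, and discarding the manifestly nonnegative remainder squares gives the inequality \eqref{EQ:high-order1}.

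First I would settle the base case $k=1$. Writing $x=ry$ with $r=|x|$ and $y\in\wp$, and using \eqref{dfdr} to identify $\mathcal{R}g(ry)=\frac{d}{dr}g(ry)$, Proposition \ref{polarinteg} turns $\left\|\frac{1}{|x|^{\alpha}}\mathcal{R}g\right\|^{2}_{L^{2}(\mathbb{G})}$ into $\int_{\wp}\int_{0}^{\infty}|h'(r)|^{2}r^{Q-1-2\alpha}\,dr\,d\sigma(y)$ with $h(r):=g(ry)$. For each fixed $y$ this is a one-dimensional problem with weight $r^{s}$, where $s:=Q-1-2\alpha$. Putting $c:=\frac{Q-2}{2}-\alpha=\frac{s-1}{2}$ and expanding the square $\int_{0}^{\infty}\left|h'(r)+\frac{c}{r}h(r)\right|^{2}r^{s}\,dr$, the cross term equals $c\int_{0}^{\infty}(|h|^{2})'r^{s-1}\,dr=-c(s-1)\int_{0}^{\infty}|h|^{2}r^{s-2}\,dr$ after integrating by parts in $r$; the boundary terms vanish because $g\in C_{0}^{\infty}(\mathbb{G}\backslash\{0\})$ keeps $h$ compactly supported away from $r=0$. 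Since $c(s-1)-c^{2}=c^{2}$ for this choice of $c$, translating back to $\mathbb{G}$ yields the first-order identity
$$\left\|\frac{1}{|x|^{\alpha}}\mathcal{R}g\right\|^{2}_{L^{2}(\mathbb{G})}=\left(\frac{Q-2}{2}-\alpha\right)^{2}\left\|\frac{g}{|x|^{1+\alpha}}\right\|^{2}_{L^{2}(\mathbb{G})}+\left\|\frac{1}{|x|^{\alpha}}\mathcal{R}g+\frac{Q-2-2\alpha}{2|x|^{1+\alpha}}g\right\|^{2}_{L^{2}(\mathbb{G})}.$$

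Next I would iterate. Abbreviating $c_{l}:=\frac{Q-2}{2}-(\alpha+l)$ and $E_{l}:=\left\|\frac{1}{|x|^{l+\alpha}}\mathcal{R}^{k-l}f\right\|^{2}_{L^{2}(\mathbb{G})}$, so that $E_{0}$ is the left-hand side of \eqref{equality-high-rem} and $E_{k}=\left\|\frac{f}{|x|^{k+\alpha}}\right\|^{2}_{L^{2}(\mathbb{G})}$, applying the first-order identity to $g=\mathcal{R}^{k-l-1}f$ with $\alpha$ replaced by $l+\alpha$ gives the recursion $E_{l}=c_{l}^{2}E_{l+1}+R_{l}$, where $R_{l}$ is precisely the $l$-th remainder square in \eqref{equality-high-rem} (note $Q-2-2(l+\alpha)=Q-2(l+1+\alpha)$). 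Unwinding from $l=0$ gives
$$E_{0}=\left(\prod_{j=0}^{k-1}c_{j}^{2}\right)E_{k}+\sum_{l=0}^{k-1}\left(\prod_{j=0}^{l-1}c_{j}^{2}\right)R_{l},$$
with the empty product (at $l=0$) equal to $1$; displaying the $l=0$ term separately reproduces exactly \eqref{equality-high-rem}, and this part needs no restriction on $Q$ or on the $c_{j}$. Dropping the nonnegative sum and taking square roots then yields \eqref{EQ:high-order1} whenever $\prod_{j=0}^{k-1}|c_{j}|\neq0$.

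It remains to treat sharpness and the equality case. Equality in \eqref{EQ:high-order1} forces every $R_{l}=0$; reading these vanishing conditions as first-order relations along each ray---starting from $R_{0}$, which expresses $\mathcal{R}^{k}f$ through $\mathcal{R}^{k-1}f$, and descending---forces $f$ to be a pure power of $|x|$ on each ray, which is incompatible with $f\in C_{0}^{\infty}(\mathbb{G}\backslash\{0\})$ unless $f\equiv0$. For sharpness of the constant I would build a minimizing sequence by regularizing the formal extremizer $|x|^{-c}$, truncating it near $0$ and near $\infty$ so that it lies in $C_{0}^{\infty}(\mathbb{G}\backslash\{0\})$; along this sequence the remainder squares become negligible relative to the main term, so the ratio of the two sides of \eqref{EQ:high-order1} tends to the stated constant. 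The main obstacle I anticipate is the bookkeeping: keeping the shifted exponents $l+\alpha$, the partial products $\prod_{j=0}^{l-1}c_{j}^{2}$, and the indices of the remainders $R_{l}$ correctly aligned throughout the iteration, together with making the truncation quantitative enough that the discarded remainders are genuinely of lower order in the sharpness argument.
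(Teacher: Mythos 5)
Theorem \ref{H-high} is imported into this paper from \cite{Ruzhansky-Suragan:identities} without an internal proof, so there is nothing in the paper itself to compare against; your argument is correct and is essentially the argument of that reference: derive the first-order identity via polar decomposition (Proposition \ref{polarinteg}) and integration by parts, iterate it with shifted exponents $\alpha\mapsto l+\alpha$ to obtain \eqref{equality-high-rem}, and discard the nonnegative squares to get \eqref{EQ:high-order1}. Your bookkeeping checks out (the recursion $E_{l}=c_{l}^{2}E_{l+1}+R_{l}$, the identification $Q-2-2(l+\alpha)=Q-2(l+1+\alpha)$, and $c(s-1)-c^{2}=c^{2}$ for $c=\tfrac{s-1}{2}$), and the equality-case and sharpness discussion via vanishing remainders and truncated powers of $|x|$ is the standard completion, the formal extremizer being $|x|^{-\left(\frac{Q-2}{2}-(\alpha+k-1)\right)}$, for which all remainder terms vanish identically.
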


When $p=2,$ Theorem \ref{LpCKN} can be restated that for each $f\in C^{\infty}_{0}(\mathbb{G}\backslash\{0\}),$
and any homogeneous quasi-norm $|\cdot|$ on $\mathbb{G}$ we have
\begin{equation}\label{eqL2CKN}
\frac{|Q-\gamma|}{2}
\left\|\frac{f}{|x|^{\frac{\gamma}{2}}}\right\|^{2}_{L^{2}(\mathbb{G})}\leq
\left\|\frac{1}{|x|^{\alpha}}\mathcal{R} f\right\|_{L^{2}(\mathbb{G})}\left\|\frac{f}{|x|^{\beta}}\right\|_{L^{2}(\mathbb{G})}, \quad \forall\alpha,\,\beta\in \mathbb{R},
\end{equation}
where $\gamma=\alpha+\beta+1$.
Combining \eqref{eqL2CKN} with \eqref{EQ:high-order1} (or \eqref{equality-high-rem}), one can obtain a number of inequalities with sharp constants, for example:

\begin{equation}\label{EQ:a1}
\frac{|Q-\gamma|}{2}
\left\|\frac{f}{|x|^{\frac{\gamma}{2}}}\right\|^{2}_{L^{2}(\mathbb{G})}\leq
C_{j}(\beta,k)\left\|\frac{1}{|x|^{\alpha}}\mathcal{R} f\right\|_{L^{2}(\mathbb{G})}\left\|\frac{1}{|x|^{\beta-k}}\mathcal{R} ^{k}f\right\|_{L^{2}(\mathbb{G})},
\end{equation}
for $\gamma=\alpha+\beta+1$ and all $\alpha,\,\beta\in \mathbb{R}$ and $k\in \mathbb{N}$, such that,
$$C_{j}(\beta,k):=\left[\prod_{j=0}^{k-1}
\left|\frac{Q-2}{2}-(\beta-k+j)\right|\right]^{-1}\neq0,$$
as well as
\begin{equation}\label{EQ:a2}
\frac{|Q-\gamma|}{2}
\left\|\frac{f}{|x|^{\frac{\gamma}{2}}}\right\|^{2}_{L^{2}(\mathbb{G})}\leq
C_{j}(\alpha,k)\left\|\frac{1}{|x|^{\alpha-k}}\mathcal{R} ^{k+1}f\right\|_{L^{2}(\mathbb{G})}\left\|\frac{f}{|x|^{\beta}}\right\|_{L^{2}(\mathbb{G})},
\end{equation}
for $\gamma=\alpha+\beta+1$ and all $\alpha,\,\beta\in \mathbb{R}$ and $k\in \mathbb{N}$, such that,
$$C_{j}(\alpha,k):=\left[\prod_{j=0}^{k-1}
\left|\frac{Q-2}{2}-(\alpha+k+j)\right|\right]^{-1}\neq0.$$
It follows from \eqref{equality-high-rem} that these constants $C_{j}(\beta,k)$ and $C_{j}(\alpha,k)$
in \eqref{EQ:a1} and \eqref{EQ:a2} are sharp.


\begin{thebibliography}{HOHOLT08}


\bibitem[BL85]{Brez1}
H. Brezis and E. Lieb.
\newblock Sobolev inequalities with remainder terms.
\newblock {\em J. Funct. Anal.}, 62:73--86, 1985.

\bibitem[BM97]{Brez2}
H. Brezis and M. Marcus.
\newblock Hardy's inequalities revisited.
\newblock {\em Ann. Scuola
	Norm. Sup. Pisa Cl. Sci.}, 25(4):217--237, 1997.

\bibitem[BN83]{Brez3}
H. Br\'ezis and L. Nirenberg.
Positive solutions of nonlinear elliptic equations involving critical Sobolev exponents. 
{\em Comm. Pure Appl. Math.}, 36(4): 437--477, 1983. 

\bibitem[BV97]{Brez4}
H. Brezis and J. V\'azquez. 
Blow-up solutions of some nonlinear elliptic problems. 
{\em Rev. Mat. Univ. Complut. Madrid},  10(2): 443--469, 1997.


\bibitem[CKN84]{CKN-1984}
L.~Caffarelli, R.~Kohn, and L.~Nirenberg.
\newblock First order interpolation inequalities with weights.
\newblock {\em Compositio Mathematica}, 53:259--275, 1984.

\bibitem[CW01]{CW-2001}
F. Catrina and Z.~Q. Wang.
\newblock On the Caffarelli-Kohn-Nirenberg inequalities: sharp constants, existence
(and non existence), and symmetry of extremals functions.
\newblock {\em Comm. Pure Appl. Math.}, 54:229--258, 2001.


\bibitem[CCR15]{Ciatti-Cowling-Ricci}
P.~Ciatti, M.~G.~Cowling, and F.~Ricci.
\newblock Hardy and uncertainty inequalities
on stratified Lie groups.
\newblock {\em Adv. Math.}, 227:365--387, 2015.

\bibitem[Cos08]{Costa-2008}
D.~G.~Costa.
\newblock Some new and short proofs for a class of Caffarelli-Kohn-Nirenberg type inequalities.
\newblock {\em J. Math. Anal. Appl.}, 337:311--317, 2008.

\bibitem[DJSJ13]{DJSJ-2013}
Y.~Di, L.~Jiang, S.Shen, and Y. Jin.
\newblock A note on a class of Hardy-Rellich type inequalities.
\newblock {\em J. Inequal. Appl.}, 84:1--6, 2013.

\bibitem[EKL15]{EKL:Hardy-p-Lap}
T.~Ekholm, H.~Kova{\v{r}}{\'{\i}}k, and A.~Laptev.
\newblock Hardy inequalities for {$p$}-{L}aplacians with {R}obin boundary
conditions.
\newblock {\em Nonlinear Anal.}, 128:365--379, 2015.

\bibitem[FR16]{FR}
V.~Fischer and M.~Ruzhansky.
\newblock {\em Quantization on nilpotent Lie groups}, volume 314 of {\em
  Progress in Mathematics}.
\newblock Birkh\"auser, 2016.

\bibitem[FS82]{FS-Hardy}
G.~B. Folland and E.~M. Stein.
\newblock {\em Hardy spaces on homogeneous groups}, volume~28 of {\em
  Mathematical Notes}.
\newblock Princeton University Press, Princeton, N.J.; University of Tokyo
  Press, Tokyo, 1982.

\bibitem[HOHOLT08]{HHLT-Hardy-many-particles}
M.~Hoffmann-Ostenhof, T.~Hoffmann-Ostenhof, A.~Laptev, and J.~Tidblom.
\newblock Many-particle {H}ardy inequalities.
\newblock {\em J. Lond. Math. Soc. (2)}, 77(1):99--114, 2008.

\bibitem[HOL15]{Laptev15}
T.~Hoffmann-Ostenhof and A.~Laptev.
\newblock Hardy inequalities with homogeneous weights.
\newblock {\em J. Funct. Anal.}, 268(11):3278--3289, 2015.

\bibitem[IIO15]{IIO:Lp-Hardy}
N.~Ioku, M.~Ishiwata, and T.~Ozawa.
\newblock Sharp remainder terms of {H}ardy type inequalities in ${L}^p$.
\newblock {\em preprint}, 2015.

\bibitem[IIO16]{IIO}
N.~Ioku, M.~Ishiwata, and T.~Ozawa.
\newblock Sharp remainder of a critical {H}ardy inequality.
\newblock {\em Arch. Math. (Basel)}, 106(1):65--71, 2016.

\bibitem[MOW15]{MOW:Hardy-Hayashi}
S.~Machihara, T.~Ozawa, and H.~Wadade.
\newblock On the {H}ardy type inequalities.
\newblock {\em preprint}, 2015.


\bibitem[RS15]{Ruzhansky-Suragan:Layers}
M.~Ruzhansky and D.~Suragan.
\newblock Layer potentials, {K}ac's problem, and refined {H}ardy inequality on
  homogeneous {C}arnot groups.
\newblock {\em arXiv:1512.02547}, 2015.

\bibitem[RS16a]{Ruzhansky-Suragan:critical}
M.~Ruzhansky and D.~Suragan.
\newblock Critical {H}ardy inequality on homogeneous groups.
\newblock {\em arXiv: 1602.04809}, 2016.

\bibitem[RS16b]{Ruzhansky-Suragan:Hardy}
M.~Ruzhansky and D.~Suragan.
\newblock Local {H}ardy and Rellich inequalities for sums of squares of vector fields.
\newblock {\em arXiv: 1601.06157}, 2016.

\bibitem[RS16c]{Ruzhansky-Suragan:identities}
M.~Ruzhansky and D.~Suragan.
\newblock Hardy and Rellich inequalities, identities, and sharp remainders on homogeneous groups.
\newblock {\em arXiv:1603.06239}, 2016.

\bibitem[RS16d]{Ruzhansky-Suragan:uncertainty}
M.~Ruzhansky and D.~Suragan.
\newblock Uncertainty relations on nilpotent Lie groups.
\newblock {\em 	arXiv:1604.06702}, 2016.

\bibitem[RS16e]{Ruzhansky-Suragan:L2-CKN}
M.~Ruzhansky and D.~Suragan.
\newblock Anisotropic $L^{2}$-weighted Hardy and $L^{2}$-Caffarelli-Kohn-Nirenberg inequalities.
\newblock {\em arXiv:1610.07032, Commun. Contemp. Math.}, to appear, 2016.

\bibitem[RS16f]{Ruzhansky-Suragan:horizontal}
M.~Ruzhansky and D.~Suragan.
\newblock On horizontal Hardy, Rellich, Caffarelli-Kohn-Nirenberg and $p$-sub-Laplacian inequalities on stratified groups.
\newblock {\em arXiv:1605.06389, J. Differ. Equ.}, to appear, 2016.

\bibitem[WW03]{WW-2003}
Z.~Q. Wang and M.~Willem.
\newblock Caffarelli-Kohn-Nirenberg inequalities with
remainder terms.
\newblock {\em J.  Funct. Anal.}, 203:550--568, 2003.
\end{thebibliography}
\end{document}